\title{A note on Hindman-type theorems for uncountable cardinals}
\author{Lorenzo Carlucci \\
  Department of Computer Science, University of Rome I.\\
  \texttt{carlucci@di.uniroma1.it}}
\date{\today{}}
\newtheorem{proposition}{Proposition}
\newtheorem{theorem}{Theorem}
\newtheorem{corollary}{Corollary}
\newtheorem{open.problem}{Open Problem}
\newtheorem*{theorem*}{Theorem}
\newtheorem*{corollary*}{Corollary}
\newtheorem*{proposition*}{Proposition*}
\newtheorem*{lemma*}{Lemma}
\newtheorem*{fact*}{Fact}
\newtheorem*{claim*}{Claim}
\newtheorem*{open.problem*}{Open Problem}
\newtheorem*{remark*}{Remark}
\newtheorem*{example*}{Example}
\newtheorem*{exercise*}{Exercise}
\begin{document} % ----------- BEGIN DOCUMENT -------------------------

% ---------------------------- TITLE/ABSTRACT PAGE --------------------

\maketitle

\begin{abstract}
Recent results of Hindman, Leader and Strauss and of Fern\'andez-Bret\'on and Rinot showed that natural versions of Hindman's Theorem fail {\em for all} uncontable cardinals. 
On the other hand, Komj\'ath proved a result in the positive direction, 
showing that {\em there are} arbitrarily large abelian groups satisfying {\em some} Hindman-type property. 
In this note we show how a family of natural Hindman-type theorems for uncountable cardinals can be obtained by adapting
some recent results of the author from their original countable setting. We also show how lower bounds for {\em some} of 
the variants considered can be obtained. 
\end{abstract}

% Uncomment this if you want the paper to start in the next page.
%\thispagestyle{empty}
%\clearpage
% ---------------------------------------------------------------------

% --------------------------- CONTENT --------------------------------

\section{Introduction}

Hindman's Finite Sums (or Finite Unions) Theorem~\cite{Hin:74} is a fundamental result in Ramsey Theory. 
It can be stated as follows (see~\cite{Baum:74}): If the finite subsets of $\omega$ are colored in finitely many 
colors, then there is an infinite family of pairwise disjoint finite subsets of $\omega$ such that
all elements and all finite unions of elements of the family have the same color. 

The question of whether analogues of Hindman's Theorem hold for uncountable cardinals goes back to Erd\H{o}s~\cite{Erd:73}. Erd\H{o}s' questions were about very strong analogues of Hindman's Theorem
(finite partitions of all subsets of a cardinal) and were 
answered in the negative, while some weakened versions were proved to hold 
(see~\cite{Ele-Haj-Kom:91} for a discussion and bibliography). 

This theme has been interestingly revived in very recent times by David Fern\'andez-Bret\'on~\cite{Bre:16}, 
who showed that some very natural analogues of Hindman's Theorem (weaker than Erd\H{o}s' versions) 
fail {\em for all\/} uncountable cardinals. More precisely, let 
$\mathrm{HIND}(\kappa,\lambda)$ indicate that every coloring in $2$ colors of the finite subsets of
a $\kappa$-sized set admits 
an $\lambda$-sized family of finite subsets such that all its members and all their finite unions have the same
color. 
The main result of~\cite{Bre:16} is that $\mathrm{HIND}(\kappa,\lambda)$ fails whenever 
$\aleph_0 < \lambda \leq \kappa$. 
Further negative results were proved by Fernand\'ez-Bret\'on and Rinot in a follow-up paper~\cite{Bre-Rin:16}, where, e.g., the 
following theorem is established: Every uncountable Abelian
group can be colored with countably many colors, so that no color class contains
uncountably many elements with all their subsums.
By contrast, Komj\'ath~\cite{Kom:16}  very recently showed results in the positive direction, proving that {\em there exist} arbitrarily large cardinals satisfying {\em some form} of Hindman-type theorem.
In particular: For every finite $n$ and infinite cardinal $\kappa$ there is an Abelian group $G$
such that for every coloring of $G$ in $\kappa$ colors there are $n$ elements so that
the same color class contains all subsums of them. Moreover: For every cardinal $\kappa$, for every $n>1$,
there exists an Abelian group $G$ such that for every coloring of $G$
in $\kappa$ colors there are distinct elements $\{a_{i,\alpha} : 1 \leq  i \leq  n, \alpha < \kappa\}$ 
such that all sums of the form $a_{i_1,\alpha_1} +\dots + a_{i_r,\alpha_r}$ ($i_1, i_2,\dots, i_r$ different) 
are distinct and in the same color class. Note that Komj\'ath's results are existential.
 
The purpose of this note is to show that some results from the author's~\cite{Car:16} about variants of 
the (countable) Finite Sums Theorem can be transferred to uncountable 
cardinals -- by essentially the same arguments -- so as to obtain a family of positive examples of uncountable
Hindman-type theorems. The role played by the countable Ramsey's Theorem in~\cite{Car:16} 
is here played by Erd\H{o}s-Rado Theorem. We further show that, for {\em some} of our Hindman-type theorems, a lower bound can be established by reduction to a Ramsey property.
%Our results deal with Hindman-type conditions that are weaker than Komj\'ath's but 
%that hold for all sufficiently large semigroups, rather than for some particular ones. 
%For some particular semigroup we also obtain that some of the Hindman-type conditions considered
%admit a lower bound in terms of Ramsey-theoretic properties.
A typical member of the family is the following {\em uncountable Hindman-Schur Theorem}: 
For every $\lambda$, for every positive integer $c$, there exists $\kappa$ such that
{\em for any} commutative semigroup $(G,+)$ of size $\kappa$ for every coloring of $G$ in $c$ colors there is 
a $\lambda$-sized $X\subseteq G$ and positive integers $a,b$ such that all $a$-term, $b$-term and $(a+b)$-term 
sums of elements of $X$ have the same color. 
Note that the theorem is universal. 
%Furthermore, in case $(G,+)$ is $([\kappa]^{<\omega},\cup)$, we show that the solution family $X\subseteq[\kappa]^{<\omega}$ can be ensured 
%to be a block sequence (in the sense of~\cite{Arg-Tod:05}, see below for a definition).
%We can show that in this case one obtains a lower bound, i.e., 
%such a $\kappa=\kappa(\lambda)$ necessarily satisfies $\kappa \to (\lambda)^2_2$.

\section{Upper bounds} 

We establish a number of reasonably natural variants of Hindman's Theorem for uncountable cardinals. 
These variants relax the monochromaticity condition to those sums whose length belongs to {\em some
structured set} of positive integers.  
The proofs are a literal translation to the uncountable setting of proofs from the author's~\cite{Car:16}. 
Where the countable Ramsey's Theorem is used in~\cite{Car:16}, we here use the Erd\H{o}s-Rado Theorem.

Let us fix some terminology and notation and recall some basic facts. 
If $\kappa$ is a cardinal we denote by $\kappa^+$ its successor. We denote by $[\kappa]^{<\omega}$
the set of finite subsets of $\kappa$.
Let $\beth_0(\kappa)=\kappa$ and $\beth_{n+1}(\kappa)=2^{\beth_n(\kappa)}$.
Erd\H{o}s-Rado Theorem (see~\cite{EHMR:84}) is the following statement: For every infinite cardinal $\kappa$, for every
integer $n$, the following holds: If the $(n+1)$-tuples of a set $X$ of cardinality $\beth_n(\kappa)^+$ are 
colored in $\kappa$ colors then there exists a subset $H$ of $X$ such that $H$ has cardinality 
$\kappa^+$ and such that all $(n+1)$-tuples from $H$ have the same color. This fact is written as 
$\beth_n(\kappa)^+ \to (\kappa^+)^{n+1}_\kappa$ in partition calculus notation. 
In this notation $\kappa \to (\lambda)^2_2$ indicates the following Ramsey property: 
If the pairs of a set of cardinality $\kappa$ are
colored in two colors then there exists a monochromatic subset of size $\lambda$.
We will only use the Erd\H{os}-Rado Theorem for
colorings with finitely many colors. If $A$ is a set of positive integers and $X$ is any set, and $f$ is a binary 
operation defined on $X$, we denote by 
$FS^A(X)$ the set of all $f$-sums of distinct $j$-many elements from $X$, for $j\in A$. In the particular case
that $f$ is the union, we write $FU^A(X)$. 

We start with an example, called the {\em uncountable Hindman-Van Der Waerden Theorem}. 

\begin{theorem}\label{thm:hindmanVDW}
For every infinite cardinal $\lambda$, for every positive integers $c$ and $d$, 
there exists a cardinal $\kappa>\lambda$ such that for any commutative
semigroup $(G,+)$ of size $\kappa$ the following holds: 
For every coloring of $G$ in $c$ colors there exists a subset $H\subseteq G$ of cardinality $\lambda$
and positive integers $a,b$ such that $FS^{\{a,a+b,\dots,a+d\cdot b\}}(H)$ is monochromatic.
\end{theorem}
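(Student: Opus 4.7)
The plan is to combine the finitary Van der Waerden theorem with the Erd\H{o}s--Rado theorem, transferring the countable Hindman--Van der Waerden argument of \cite{Car:16} to the uncountable setting by using Erd\H{o}s--Rado in place of Ramsey's Theorem. First I would fix $W = W(d+1, c)$, the Van der Waerden number ensuring that every $c$-coloring of $\{1, \dots, W\}$ contains a monochromatic arithmetic progression of length $d+1$, and set $\kappa = \beth_{W-1}(\lambda)^+$, which is strictly larger than $\lambda$ and satisfies the partition relation $\kappa \to (\lambda^+)_{m}^{W}$ for every finite number $m$ of colors.

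Next, given a commutative semigroup $(G, +)$ of size $\kappa$ and a coloring $f \colon G \to \{1, \dots, c\}$, I would enumerate $\kappa$ distinct elements $\{g_\alpha : \alpha < \kappa\} \subseteq G$ and define, for each $W$-element subset $F = \{\alpha_1 < \dots < \alpha_W\}$ of $\kappa$, the auxiliary coloring
$$\chi(F) = \bigl(f\bigl(\textstyle\sum_{i \in S} g_{\alpha_i}\bigr)\bigr)_{\emptyset \neq S \subseteq \{1, \dots, W\}},$$
which uses only finitely many colors ($c^{2^W - 1}$ of them). Erd\H{o}s--Rado then yields a $\chi$-homogeneous set $I \subseteq \kappa$ of cardinality at least $\lambda$; let $H = \{g_\alpha : \alpha \in I\}$. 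The central claim is that for every $j \in \{1, \dots, W\}$ all $j$-element sums of $H$ share a common color $c_j$: given two $j$-subsets $T_1, T_2 \subseteq I$, I would pick any $W-j$ elements of $I$ lying above all of $T_1 \cup T_2$ (possible since $I$ is infinite), adjoin these same witnesses to each of $T_1$ and $T_2$ to form $W$-subsets $F_1, F_2$, observe that the distinguished $j$-subset then occupies the first $j$ order-positions of both $F_i$, and invoke $\chi$-homogeneity on the coordinate $S = \{1, \dots, j\}$ to equate the colors of the two $j$-sums.

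Once the claim is in hand, the map $j \mapsto c_j$ is a $c$-coloring of $\{1, \dots, W\}$ and Van der Waerden's Theorem furnishes a monochromatic arithmetic progression $a, a+b, \dots, a+db$, immediately making $FS^{\{a, a+b, \dots, a+db\}}(H)$ monochromatic. The main point to execute with care is the extension argument: one must encode enough information into a single application of Erd\H{o}s--Rado so that homogeneity forces the color of a sum to depend only on the number of summands, not on which $W$-subset one happens to be viewing it inside. Packing all $2^W - 1$ nonempty coordinate-subsets into one auxiliary coloring achieves this in one shot; alternatively, iterating Erd\H{o}s--Rado downward through $j = W, W-1, \dots, 1$ would give a slightly more expensive but more transparent route, at the cost of using a taller beth tower in the choice of $\kappa$.
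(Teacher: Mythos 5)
Your proposal is correct and follows essentially the same route as the paper's proof: a single application of Erd\H{o}s--Rado to an auxiliary coloring that records the $f$-colors of sums over a fixed-size index set, followed by finitary Van der Waerden applied to the induced coloring of sum-lengths. The only (harmless) difference is that you track all $2^W-1$ subset-sums where the paper tracks only the $n$ initial-segment sums, and you make explicit the extension argument (padding a $j$-subset to a $W$-subset from above) that the paper leaves implicit.
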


\begin{proof}
Let $\lambda$, $d$ and $c$ be given. By Van der Waerden's Theorem~\cite{VdW:27} let $n$ be so large that every coloring 
of $[1,n]$ in $c$ colors admits a monochromatic 
arithmetic progression of length $d$. Let $\kappa$ witness the Erd\H{o}s-Rado Theorem
for colorings of $n$-tuples in $c^n$ colors and target size $\lambda$ for the homogeneous set. 
Let $(G,+)$ be a commutative semigroup of 
cardinality $\kappa$. Let $f:G\to c$ be a coloring. Let $\{x_\alpha \,:\, \alpha < \kappa\}$
be an injective enumeration of $G$. Define $F:[\kappa]^n\to c^n$ as follows:
$$ F(\alpha_1,\dots,\alpha_n) := \langle f(x_{\alpha_1}),f(x_{\alpha_1}+x_{\alpha_2}),\dots,f(x_{\alpha_1}+\dots + x_{\alpha_n})\rangle.$$
By Erd\H{o}s-Rado Theorem let $X$ be a subset of $\kappa$ of cardinality $\lambda$ such that
$[X]^n$ is monochromatic of color $i<c^n$ for $F$. Let $H=\{ x_\alpha \,:\, \alpha \in X\}$.
For each $1\leq m\leq n$, all sums of $m$ many terms from $H$ have the same color $i_m < c$ under $f$.
Thus we have an induced coloring of $[1,n]$ in $c$ colors. By choice of $n$ there exist positive 
integers $a,b$ such that $a,a+b,\dots,a+d\cdot b$ is a monochromatic arithmetic progression. 
\end{proof}

As is the case in the countable setting (see\cite{Car:16}), a host of variations can be proved by the same 
argument, by applying a different finite combinatorial theorem in place of Van der Waerden's Theorem.
For example, using the Folkman-Rado-Sanders' Theorem~\cite{San:68} 
we obtain the following {\em uncountable Hindman-Folkman's Theorem}.

\begin{theorem}\label{thm:hindmanF}
For every infinite cardinal $\lambda$, for every positive integers $c$ and $d$, 
there exists a $\kappa>\lambda$ such that for any commutative
 semigroup $G$ of cardinality $\kappa$ the following holds: 
If $f$ colors $G$ in $c$ colors there exists a subset $H\subseteq G$ of cardinality $\lambda$
and positive integers $a_1<\dots < a_d$ such that $FS^{FS(\{a_1,\dots,a_d\})}(H)$ is monochromatic.
\end{theorem}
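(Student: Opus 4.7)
The plan is to mimic the proof of Theorem~\ref{thm:hindmanVDW} verbatim, replacing Van der Waerden's Theorem with the Folkman--Rado--Sanders Theorem as the finite combinatorial ingredient. Recall that Folkman--Rado--Sanders states: for every positive integers $c$ and $d$ there exists a positive integer $n$ such that every $c$-coloring of $[1,n]$ admits positive integers $a_1 < \dots < a_d \le n$ with $FS(\{a_1,\dots,a_d\})$ monochromatic.

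Given $\lambda$, $c$, and $d$, I would first fix such an $n$ by Folkman--Rado--Sanders. Then let $\kappa$ be a cardinal witnessing the Erd\H{o}s--Rado Theorem for colorings of $n$-tuples in $c^n$ colors with homogeneous target size $\lambda$; concretely one can take $\kappa = \beth_{n-1}(c^n \cdot \lambda)^+$ or any larger cardinal, so that $\kappa \to (\lambda)^n_{c^n}$. Let $(G,+)$ be a commutative semigroup of size $\kappa$ and $f\colon G \to c$ any coloring. Fix an injective enumeration $\{x_\alpha \,:\, \alpha < \kappa\}$ of $G$ and define $F \colon [\kappa]^n \to c^n$ by
$$F(\alpha_1,\dots,\alpha_n) := \langle f(x_{\alpha_1}),\, f(x_{\alpha_1}+x_{\alpha_2}),\,\dots,\, f(x_{\alpha_1}+\dots+x_{\alpha_n})\rangle,$$
exactly as in the proof of Theorem~\ref{thm:hindmanVDW}.

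Applying the Erd\H{o}s--Rado Theorem yields $X \subseteq \kappa$ of cardinality $\lambda$ on which $F$ is constant. Setting $H = \{x_\alpha \,:\, \alpha \in X\}$, commutativity of $+$ guarantees that for each $1 \le m \le n$ every $m$-term sum of distinct elements of $H$ receives the same $f$-color, call it $i_m < c$. The assignment $m \mapsto i_m$ is a $c$-coloring of $[1,n]$, so by the choice of $n$ there exist $a_1 < \dots < a_d$ in $[1,n]$ such that the set $FS(\{a_1,\dots,a_d\})$ is monochromatic under $m \mapsto i_m$. Unpacking the definitions, every $f$-value of an $s$-term sum from $H$, for $s \in FS(\{a_1,\dots,a_d\})$, agrees with this common color, i.e.\ $FS^{FS(\{a_1,\dots,a_d\})}(H)$ is monochromatic for $f$.

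The only step requiring any care is the bookkeeping that ties the $F$-homogeneous set $X$ to uniform behavior of $f$ on $m$-term sums: this uses that, on a homogeneous set, the $f$-color of a sum $x_{\alpha_1} + \dots + x_{\alpha_m}$ depends only on $m$ and not on the chosen $\alpha_1 < \dots < \alpha_m$ in $X$ (here commutativity lets us assume the indices are increasing). No genuine obstacle arises beyond this; the proof is a literal transcription of the countable argument, with Erd\H{o}s--Rado in place of Ramsey's Theorem and Folkman--Rado--Sanders in place of Van der Waerden.
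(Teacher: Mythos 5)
Your proposal is correct and is exactly the argument the paper intends: the paper gives no separate proof of this theorem, stating only that it follows from the proof of Theorem~\ref{thm:hindmanVDW} by substituting the Folkman--Rado--Sanders Theorem for Van der Waerden's Theorem, which is precisely what you carry out (including the correct Erd\H{o}s--Rado bound, since $c^n\cdot\lambda=\lambda$). No discrepancies to report.
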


%Theorem \ref{thm:hindmanF} can be compared with Theorem 1 of~\cite{Kom:16}. The latter is stronger
%in that it works for $\kappa$-colorings. While Theorem 1 of~\cite{Kom:16} is existential (i.e., proves the 
%existence of an abelian group satisfying some Hindman-type homogeneity condition), Theorem 
%\ref{thm:hindmanF} is universal (i.e., the Hindman-type condition is satisfied by any large enough 
%semigroup).

By contrast with~\cite{Car:16}, in the present setting we have not to worry about the provability 
of the relevant theorems from finite combinatorics (Van Der Waerden's Theorem, Schur's Theorem, Folkman's Theorem) 
in a weak theory of arithmetic. For example, using a theorem by Bigorajska and Kotlarski~\cite{Big-Kot:99} on partitioning $\alpha$-large sets for $\alpha<\varepsilon_0$, we obtain the following Hindman-type theorem.

\begin{theorem}
For every infinite cardinal $\lambda$, for every ordinal $\beta < \epsilon_0$, for every positive integers $m$ and $d$,
there exists a $\kappa>\lambda$ such that for any commutative
semigroup $G$ of cardinality $\kappa$ the following holds: 
If $f$ colors $G$ in $d$ colors there exists a subset $H\subseteq G$ of cardinality $\lambda$
and an $\omega^\beta$-large set $B$ 
of positive integers, with $\min(B)\geq m$, such that $FS^{B}(H)$ is monochromatic.
\end{theorem}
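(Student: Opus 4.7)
The plan is to follow the template of Theorem~\ref{thm:hindmanVDW} verbatim, substituting Bigorajska--Kotlarski's partitioning theorem for $\omega^\beta$-large sets in place of Van der Waerden's Theorem. Bigorajska and Kotlarski~\cite{Big-Kot:99} prove the following finitary statement: for every $\beta < \varepsilon_0$ and all positive integers $m, d$, there exists $N = N(\beta,m,d)$ such that every $d$-coloring of the interval $[m,N]$ admits an $\omega^\beta$-large monochromatic subset $B$ with $\min(B)\geq m$. This is the finitary Ramsey-type input we will feed into the Erd\H{o}s--Rado machinery.

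Fix $\lambda$, $\beta < \varepsilon_0$, and positive integers $m$ and $d$. Choose $N$ as above and pick $\kappa$ witnessing the Erd\H{o}s--Rado Theorem for colorings of $N$-tuples in $d^N$ colors with target homogeneous size $\lambda$. Let $(G,+)$ be any commutative semigroup of cardinality $\kappa$, let $f : G \to d$ be a coloring, and fix an injective enumeration $\{x_\alpha : \alpha < \kappa\}$ of $G$. Define
$$F(\alpha_1,\dots,\alpha_N) := \langle f(x_{\alpha_1}),\, f(x_{\alpha_1}+x_{\alpha_2}),\, \dots,\, f(x_{\alpha_1}+\dots+x_{\alpha_N})\rangle.$$
By Erd\H{o}s--Rado there exists $X \subseteq \kappa$ with $|X|=\lambda$ on which $F$ is constantly equal to some tuple $\langle i_1,\dots,i_N\rangle \in d^N$. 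Set $H := \{x_\alpha : \alpha \in X\}$.

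By commutativity of $+$, for each $1 \leq k \leq N$ the value $i_k$ is the $f$-color of every sum of $k$ distinct elements from $H$. The assignment $k \mapsto i_k$ is therefore a $d$-coloring of $[1,N]$; applying Bigorajska--Kotlarski yields an $\omega^\beta$-large set $B \subseteq [m,N]$ with $\min(B) \geq m$ that is monochromatic for this induced coloring. Hence $FS^B(H)$ is monochromatic under $f$, as required.

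The main obstacle is purely bibliographic: one must verify that Bigorajska--Kotlarski is available in precisely this finitary form, with an explicit numerical threshold $N$ depending only on $\beta$, $m$, $d$, rather than merely as an infinitary or density statement. Once this is in hand, the proof is a word-for-word replay of Theorem~\ref{thm:hindmanVDW}, illustrating the general slogan that any finitary Ramsey-type theorem on the positive integers lifts, via Erd\H{o}s--Rado, to a Hindman-type theorem for uncountable commutative semigroups.
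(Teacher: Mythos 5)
Your proof is correct and follows the paper's argument essentially verbatim: both reduce to the template of Theorem~\ref{thm:hindmanVDW} with a finitary partition result for $\omega^\beta$-large sets in place of Van der Waerden's Theorem. The ``bibliographic obstacle'' you flag is resolved in the paper exactly as you would hope: Theorem 1 of Bigorajska--Kotlarski gives that any $d$-partition of an $\omega^\beta\cdot(d+1)$-large set has an $\omega^\beta$-large piece, and one picks $n$ so that $[m,n]$ contains an $\omega^\beta\cdot(d+1)$-large subset, which yields precisely the finitary threshold statement you assumed.
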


\begin{proof}
From Theorem 1 of~\cite{Big-Kot:99} it follows that if an $\omega^\beta\cdot (d+1)$-large set is partitioned
in $d$ pieces then one of the pieces is $\omega^\beta$-large. We start by picking $n$ so large that
$[m,n]$ contains an $\omega^\beta\cdot (d+1)$ subset, say $X$. We then argue as in
the proof of Theorem \ref{thm:hindmanVDW}, applying the Bigorajska-Kotlarski's Theorem to obtain 
$\omega^\beta$-large homogeneous subset of $X$
for the coloring induced on $X\subseteq [m,n]$ by the coloring of $[1,n]$ defined as in the proof of
Theorem \ref{thm:hindmanVDW}.
\end{proof}

In the particular case of the semigroup consisting of $[\kappa]^{<\omega}$ with union, we can 
additionally ensure an ``unmeshedness'' or ``block'' condition on the solution set. Two finite non-empty 
subsets $x,y$ of some set $X$ are {\em unmeshed} if either $\max{x}<\min{y}$ or 
$\max{y}<\min{x}$. A family of finite subsets of some set $X$ is called {\em pairwise unmeshed} 
if its members are pairwise unmeshed. This notion is equivalent to the notion of a {\em block sequence}
(see, e.g.,~\cite{Arg-Tod:05}, Chapter III).

If $T$ is some Hindman-type theorem, we indicate by ``block $T$'' or by ``$T$ with unmeshedness''
the strengthening of $T$ obtained by imposing that the solution set is a block sequence. This literally 
makes sense only when dealing with colorings of $[X]^{<\omega}$ for some $X$, although it could
be generalized to other settings. 
By very slightly adapting the proof of Theorem \ref{thm:hindmanVDW}, we obtain the following 
{\em uncountable Hindman-Schur theorem with unmeshedness}.

\begin{theorem}\label{thm:unmeshed}
For every infinite cardinal $\lambda$, for every positive integer $c$, 
there exists a cardinal $\kappa>\lambda$ such that the semigroup $([\kappa]^{<\omega},\cup)$ satisfies
the following: 
If $f$ colors $[\kappa]^{<\omega}$ in $c$ colors then there exists a $\lambda$-sized family 
$H$ of finite subsets of $\kappa$ and positive integers $a,b$ such that $FU^{\{a,b,a+b\}}(H)$ is monochromatic,
and, furthermore, $H$ is a block sequence.
\end{theorem}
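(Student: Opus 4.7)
The plan is to adapt the proof of Theorem~\ref{thm:hindmanVDW} almost literally, with two small modifications: use Schur's theorem in place of Van der Waerden's (since we now want a monochromatic Schur triple $\{a,b,a+b\}$ instead of an arithmetic progression), and choose the enumeration of the semigroup $([\kappa]^{<\omega},\cup)$ so that the resulting solution set is automatically a block sequence.

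First, I would invoke Schur's theorem to obtain a positive integer $n$ such that every $c$-coloring of $[1,n]$ admits positive integers $a,b$ with $a,b,a+b\in[1,n]$ all receiving the same color. Then I would let $\kappa$ witness the Erd\H{o}s--Rado theorem for colorings of $n$-tuples in $c^n$ colors with homogeneous set of cardinality $\lambda$. Given a coloring $f\colon[\kappa]^{<\omega}\to c$, I would define $F\colon[\kappa]^n\to c^n$ on increasing $n$-tuples by
$$F(\alpha_1,\dots,\alpha_n) := \langle f(\{\alpha_1\}),\, f(\{\alpha_1,\alpha_2\}),\, \dots,\, f(\{\alpha_1,\dots,\alpha_n\})\rangle,$$
and apply Erd\H{o}s--Rado to extract $X\subseteq\kappa$ of cardinality $\lambda$ on which $F$ is constant with some value $\langle i_1,\dots,i_n\rangle$.

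Since $|X|=\lambda$ is infinite, any $m$-subset of $X$ with $m\leq n$ extends inside $X$ to an $n$-subset, and the $F$-monochromaticity of $[X]^n$ then forces every such $m$-subset to be $f$-colored by $i_m$. This produces an induced $c$-coloring $m\mapsto i_m$ of $[1,n]$, to which Schur's theorem applies, yielding $a,b$ with $i_a=i_b=i_{a+b}$.

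To enforce the block-sequence condition at no additional cost, I would take $H:=\{\{\alpha\}:\alpha\in X\}$. Then $H$ is a family of $\lambda$ singletons, automatically pairwise unmeshed, and every member of $FU^{\{a,b,a+b\}}(H)$ is a subset of $X$ of cardinality $a$, $b$, or $a+b$, hence is $f$-colored by the common value $i_a=i_b=i_{a+b}$. There is essentially no new obstacle: the unmeshedness requirement is absorbed into the enumeration choice (singletons in place of an arbitrary enumeration of $[\kappa]^{<\omega}$), and the rest is the argument of Theorem~\ref{thm:hindmanVDW} with Schur in place of Van der Waerden.
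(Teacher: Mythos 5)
Your proposal is correct and follows essentially the same route as the paper: the paper's proof consists of running the Theorem~\ref{thm:hindmanVDW} argument (with Schur's theorem supplying the finite combinatorial input for the $\{a,b,a+b\}$ pattern) on a $\kappa$-sized block sequence in $[\kappa]^{<\omega}$, and your choice of the singletons $\{\{\alpha\}:\alpha\in X\}$ is just a concrete instance of such a block sequence. The verification that every $m$-fold union of distinct singletons from $X$ receives the color $i_m$ is exactly the step the paper leaves implicit, and you carry it out correctly.
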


\begin{proof}
It is sufficient in the proof of Theorem \ref{thm:hindmanVDW} to apply the Erd\H{o}s-Rado theorem to a $\kappa$-sized
block sequence in $[\kappa]^{<\omega}$. 
\end{proof}

Note that this theorem is existential, i.e., the Hindman-type property holds for a particular semigroup of cardinality
$\kappa$, as is the case in Komj\'ath's results in~\cite{Kom:16}. 
We will show in the next section that unmeshedness of the solution set 
is useful to obtain lower bound on the Hindman-type theorems considered.
%For example, any $\kappa$ as in Theorem \ref{thm:unmeshed} satisfies $\kappa \to (\lambda)^2_2$. 

\section{Lower bounds}

We show a  lower bound (on $\kappa$ as a function of $\lambda$) on {\em some} of the above-considered
variants of Hindman's Theorem. The following proposition singles-out a weak condition for the lower bound to be
valid. 

\begin{proposition}\label{thm:lowbd}
Let $\lambda$ be an infinite cardinal. Let $\kappa$ be so large that it satisfies the following condition: For every coloring $c$ of $[\kappa]^{<\omega}$ in $3$ colors, there exists a $\lambda$-sized unmeshed family $H$ of finite subsets of $\kappa$ and an even integer $b>0$ such that $FU^{\{b\}}(H)$ is monochromatic under $c$. Then $\kappa \to (\lambda)^2_2$.
\end{proposition}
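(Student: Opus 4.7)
The plan is to reduce $\kappa \to (\lambda)^2_2$ to the stated hypothesis by engineering, from a given $c^\ast : [\kappa]^2 \to 2$, an auxiliary $3$-coloring $c$ of $[\kappa]^{<\omega}$ whose monochromatic unmeshed $b$-fold unions (with $b$ even) directly deliver a $\lambda$-sized $c^\ast$-monochromatic subset. I would assume $\lambda$ uncountable throughout; the case $\lambda = \aleph_0$ is just Ramsey's Theorem.

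I would define $c(x) := 2$ whenever $|x|$ is odd (or $|x| \leq 1$), and $c(x) := c^\ast(\{\min L, \min U\})$ whenever $|x|$ is positive even, where $L$ and $U$ are respectively the sets of the $|x|/2$ smallest and $|x|/2$ largest elements of $x$. Color $2$ is thus reserved for odd-sized sets, while colors $0, 1$ transport $c^\ast$.

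Applying the hypothesis to $c$ yields an unmeshed family $H = \{h_\alpha : \alpha \in I\}$ of size $\lambda$ and an even $b > 0$ with $FU^{\{b\}}(H)$ monochromatic of color $i$ under $c$. A pigeonhole on the integer sizes $|h_\alpha|$ then produces a $\lambda$-sized $I' \subseteq I$ on which every $|h_\alpha|$ equals a common value $s$. Since $b$ is even, each $b$-fold union from this sub-family has size $bs$, which is even, so $i \in \{0, 1\}$. By unmeshedness and uniform block size, the lower half of $\bigcup_{j=1}^{b} h_{\alpha_j}$ (with $\alpha_1 < \dots < \alpha_b$ in $I'$) is precisely $h_{\alpha_1} \cup \dots \cup h_{\alpha_{b/2}}$, with minimum $m_{\alpha_1} := \min h_{\alpha_1}$; analogously the upper half has minimum $m_{\alpha_{b/2+1}}$. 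Monochromaticity of $c$ therefore translates to $c^\ast(\{m_{\alpha_1}, m_{\alpha_{b/2+1}}\}) = i$ for every such $b$-tuple in $I'$.

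To finish I would select a $\lambda$-sized $\Lambda \subseteq I'$ in which any two $\gamma < \delta$ admit at least $b/2 - 1$ indices of $I'$ strictly between them and at least $b/2 - 1$ indices of $I'$ strictly above $\delta$ --- enumerating $I'$ in order type $\lambda$ and keeping every $b$-th element suffices. Any $\gamma < \delta$ in $\Lambda$ can then be placed in positions $1$ and $b/2 + 1$ of some $b$-tuple in $I'$, yielding $c^\ast(\{m_\gamma, m_\delta\}) = i$; since $\alpha \mapsto m_\alpha$ is strictly increasing by unmeshedness, $\{m_\gamma : \gamma \in \Lambda\}$ is a $\lambda$-sized $c^\ast$-monochromatic subset of $\kappa$, as required. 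The main delicate step I anticipate is the uniform-size pigeonhole --- trivial when $\mathrm{cf}(\lambda) > \aleph_0$ and requiring a little extra care (via a preliminary parity-based thinning) when $\lambda$ has countable cofinality --- after which everything else is routine bookkeeping with the block structure arising from unmeshedness.
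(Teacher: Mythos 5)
Your proof is correct and follows essentially the same route as the paper's: reserve color $2$ for odd-sized sets, encode the given pair-coloring on the two halves of an even-sized set, pigeonhole to a uniform block size so that unmeshedness identifies the two halves of a $b$-fold union with the unions of the first and last $b/2$ blocks, and extract a $\lambda$-sized homogeneous set from one distinguished point per half. The only differences are cosmetic (you use the minima of the halves where the paper uses the maxima, and you fix positions $1$ and $b/2+1$ where the paper takes pairwise unmeshed elements of $FU^{\{b/2\}}(H')$); the countable-cofinality caveat you raise about the uniform-size pigeonhole applies equally to the paper's own argument, which invokes that pigeonhole without comment.
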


\begin{proof}
Let $d:[\kappa]^2\to 2$ be given. Define $c:[\kappa]^{<\omega}\to 3$ as 
follows. If $x$ is of odd cardinality, color $2$. Else $c(x)=d(\max(y),\max(z))$ where
$x = y \cup z$ such that $|y|=|z|$ and $y$ and $z$ are apart. Let $H$ be a $\lambda$-sized
unmeshed family of finite subsets of $\kappa$ and let $b$ be a positive even integer such that
$FU^{\{b\}}(H)$ is $c$-monochromatic. By the Pigeonhole Principle let $H'\subseteq H$ be a $\lambda$-sized family 
such that each element of $H'$ has the same cardinality, say $n$. Note that unmeshedness is preserved under
taking subsets.
Suppose without loss of generality that it is $b$. Suppose that $n$ is odd. Then it must be
the case that the color $2$ is never assigned to elements of $FU^{\{b\}}(H')$ since any 
$b$-sized union of elements of $H'$ has even cardinality. Suppose now that $n$ is even. Then 
again the color $2$ is never assigned to elements of  $FU^{\{b\}}(H')$. 
Let $i<2$ be the color of 
$FU^{\{b\}}(H')$ under $c$.
Now pick $X$ of size $\lambda$ as follows: $X$ consists of the max of elements of $FU^{\{b/2\}}(H')$ picked
in such a way as to satisfy that if $s,t\in X$, then $s=\max(y)$ for some $y\in FU^{\{b/2\}}(H')$ and $t=\max(z)$
for some $z \in FU^{\{b/2\}}(H')$ 
and $\max(y)<\min(z)$ or $\max(z)<\min(y)$.
We claim that $[X]^2$ is monochromatic for $d$ of color $i$. Let $s,t\in X$. Then for some $y,z\in FU^{\{b/2\}}(H')$, 
$\max(y)=s$ and $\max(z)=t$. Then $d(s,t)=d(\max(y),\max(z))=c(y\cup z)$ and $y \cup z\in FU^{\{b\}}(H')$.
Hence $d(s,t)=i$.
\end{proof}

The proof of Proposition \ref{thm:lowbd} is an adaptation of an argument attributed to Justin Tatch Moore which 
the author learned from David Fern\'andez-Bret\'on (personal communication), aimed 
at showing that any regular ``Hindman cardinal'' is weakly compact (a cardinal $\kappa$ is a {\em Hindman cardinal} if any finite coloring of $[\kappa]^{<\omega}$ admits a $\kappa$-sized family 
of finite subsets of $\kappa$ such that all members of the family and all finite unions thereof have the same color).

We say that a cardinal $\kappa$ {\em satisfies the Hindman-Schur condition with unmeshedness for a cardinal\/} $\lambda$
if the following holds: For every coloring $c$ of $[\kappa]^{<\omega}$
in $3$ colors, there exists a $\lambda$-sized unmeshed family $H$ of finite subsets of $\kappa$ and positive
integers $a,b$ such that $FU^{\{a,b,a+b\}}(H)$ is monochromatic under $c$. 
Theorem \ref{thm:unmeshed} above implies that for every infinite cardinal $\lambda$ there exists a $\kappa$ such that 
$([\kappa]^{<\omega},\cup)$ satisfies the Hindman-Schur condition with unmeshedness for $\lambda$.

\begin{corollary}
If $\kappa$ satisfies the Hindman-Schur condition with unmeshedness for $\lambda$, then ${\kappa \to (\lambda)^2_2}$.
\end{corollary}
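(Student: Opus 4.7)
The plan is to derive this corollary as a nearly immediate consequence of Proposition \ref{thm:lowbd}. That proposition already establishes $\kappa \to (\lambda)^2_2$ from a weaker-looking hypothesis: namely, that every $3$-coloring of $[\kappa]^{<\omega}$ admits a $\lambda$-sized unmeshed family $H$ together with an \emph{even} positive integer $b$ such that $FU^{\{b\}}(H)$ is monochromatic. So the task reduces to showing that the Hindman-Schur condition with unmeshedness for $\lambda$ implies this weaker condition.

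Given a $3$-coloring $c$ of $[\kappa]^{<\omega}$, apply the Hindman-Schur condition with unmeshedness to obtain a $\lambda$-sized unmeshed family $H$ and positive integers $a, b$ such that $FU^{\{a,b,a+b\}}(H)$ is $c$-monochromatic. In particular, each of $FU^{\{a\}}(H)$, $FU^{\{b\}}(H)$, and $FU^{\{a+b\}}(H)$ is $c$-monochromatic (of the same color).

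The key observation is a parity argument: among the three integers $a$, $b$, $a+b$ at least one is even, since if $a$ and $b$ are both odd then $a+b$ is even, while if either $a$ or $b$ is even we can take that one. Let $b^\ast \in \{a, b, a+b\}$ be even; then $FU^{\{b^\ast\}}(H)$ is $c$-monochromatic, $H$ is a $\lambda$-sized unmeshed family, and $b^\ast$ is a positive even integer. Hence the hypothesis of Proposition \ref{thm:lowbd} is satisfied, and we conclude $\kappa \to (\lambda)^2_2$.

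There is no real obstacle here; the proof is essentially a one-line parity remark composed with Proposition \ref{thm:lowbd}. The substantive content was already extracted in Proposition \ref{thm:lowbd}, and the purpose of the corollary is simply to package the reduction in a form directly applicable to Theorem \ref{thm:unmeshed}.
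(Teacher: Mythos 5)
Your proof is correct and is essentially identical to the paper's: both reduce the corollary to Proposition \ref{thm:lowbd} via the observation that at least one of $a$, $b$, $a+b$ must be even. You merely spell out the parity case analysis and the reduction a bit more explicitly than the paper does.
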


\begin{proof}
Let $a,b$ be the positive integers witnessing that $\kappa$ satisfies the Hindman-Schur condition for 
$\lambda$. Just observe that one of $a,b,a+b$ is even, and invoke Proposition \ref{thm:lowbd}.
\end{proof}

Lower bounds on $\kappa= \kappa(\lambda)$ satisfying the Hindman-Schur Theorem with unmeshedness can then be read-off from known lower bounds on $\kappa=\kappa(\lambda)$ satisfying $\kappa \to (\lambda)^2_2$. For example, for any $\lambda$,
$\kappa=\kappa(\lambda^+)$ has to be larger than $2^\lambda$ (see~\cite{EHMR:84}). It is a natural question whether one can get similar lower bounds if the unmeshedness condition is dropped. 

\medskip
{\bf Acknowledgment} I thank David Fern\'andez-Bret\'on for his reading of and useful comments on a preliminary version of this note. 

\iffalse
\section{Skew Ramsey Theorem}

Let $\kappa \to^s (\lambda)^2_2$ indicate that for every 2-coloring $f$ of $[\kappa]^2$ there exist
a pair $(X_1,X_2)$ of subsets of $\kappa$ of size $\lambda$ such that every increasing pair
$\{\alpha_1,\alpha_2\}$ with $\alpha_1<\alpha_2$, $\alpha_i \in X_i$ has the same $f$-color. 
\fi

\end{document}